\newtheorem{theorem}{Theorem}
\newtheorem{lemma}{Lemma}
\theoremstyle{definition}
\theoremstyle{remark}
\newtheorem{remark}{Remark}
\numberwithin{equation}{section}
\newcommand{\cT}{\mathcal{T}}
\newcommand{\linspan}{\operatorname{span}}
\newcommand{\norm}[1]{\|#1\|}
\newcommand{\R}{\mathbb{R}}
\newcommand{\cred}[1]{{\color{red} #1}}
\newcommand{\cblue}[1]{{\color{blue} #1}}
\newcommand{\LIU}[1]{{#1}}
\begin{document}

\title{%
Projection error-based guaranteed $L^2$ error bounds for finite element approximations of Laplace eigenfunctions 
}

\author{
Xuefeng LIU\thanks{X. Liu is supported by Japan Society for the Promotion of Science: Fund for the Promotion of Joint International Research (Fostering Joint International Research (A)) 20KK0306, Grant-in-Aid for Scientific Research (B) 20H01820, 21H00998. 
}
\\ 
\small Faculty of Science, Niigata University,\\
\small 8050 Ikarashi 2-no-cho, Nishi-ku, Niigata City, Niigata 950-2181, Japan,\\
\small email: xfliu@math.sc.niigata-u.ac.jp
\\ 
\\
Tom\'a\v s Vejchodsk\'y\thanks{T. Vejchodsk\'y is supported by the Czech Science Foundation, project no.~20-01074S, and by RVO 67985840.}\\
\small Institute of Mathematics, Czech Academy of Sciences,\\
\small \v{Z}itn\'a 25, Prague 1, 115\,67, Czech Republic,\\
\small email: vejchod@math.cas.cz
}


\maketitle




\begin{abstract}
For conforming finite element approximations of the Laplacian eigenfunctions, a fully computable guaranteed error bound in the $L^2$ norm sense is proposed.
The bound is based on the {\em a priori}  error {estimate} for the Galerkin projection of {the} conforming finite element method, and has an optimal speed of convergence for the eigenfunctions with the worst regularity. The resulting error estimate bounds the distance of spaces of exact and approximate eigenfunctions and, hence, is robust even in the case of multiple and tightly clustered eigenvalues. The accuracy of the proposed bound is illustrated by numerical examples.
\end{abstract}

\noindent{\bf Keywords:} 
Laplace, eigenvalue problem, guaranteed, rigorous, error estimation, eigenfunction, multiple, cluster, directed distance, gap, finite element method

\noindent{\bf MSC:} 
65N25, 65N30 \\


\section{Introduction}

Deriving guaranteed error bounds for approximate eigenfunctions of the Laplace operator is a challenging task due to possible ill-posedness of eigenfunctions. In the case of multiple and tightly clustered eigenvalues, the corresponding exact
\LIU{eigenfunctions} are sensitive even to small perturbations of the problem and may change abruptly. Any accurate  error bound has to take into  account this sensitivity. Therefore, the recent results \cite{CanDusMadStaVoh2017,CanDusMadStaVoh2018,CanDusMadStaVoh2020,LiuVej2022} consider an arbitrary cluster of eigenvalues and the space generated by corresponding eigenfunctions. The resulting error bounds then estimate a distance between the eigenfunction spaces associated to exact and approximate eigenvalues. The particular distances between spaces are naturally based either on the energy or $L^2$ norm.

Interestingly, the $L^2$ norm bounds  provided by Algorithm I of  \cite{LiuVej2022}, which is based on the Rayleigh quotients of the approximate eigenfunctions, are considerably less accurate than the corresponding bounds in the energy norm. 
For the linear FEM solutions to the Laplacian eigenvalue problems, Algorithm I of \cite{LiuVej2022} provides the error bounds in the energy norm that exhibit the optimal rate of convergence. However, the $L^2$ error bounds often converge sub-optimally and are considerably less accurate. It is worth pointing out that, the residual error-based Algorithm II of  \cite{LiuVej2022} provides an optimal $L^2$ norm bound through the re-constructed flux and the Prager--Synge method; see the computation results for the L-shaped domain in \S \ref{sec:l-shape}.

\medskip

In this paper, we bound the $L^2$ error by utilizing the {\em a priori} error estimation 
 proposed {in} \cite{LiuOis2013} for the boundary value problems, and the resulting bound achieves the optimal rate of convergence and more accurate numerical results.
For eigenfunction space $E$ associated to eigenvalues in a cluster $\mathcal{C}$ with eigenvalues $\{\lambda_n, \cdots, \lambda_N\}$, we obtain the following bounds in Lemma \ref{thm:u_and_pi_k_u_new_ver} and Theorem \ref{th:main_new_}:
\begin{equation*}
\delta_b(E,E_h)
\le  (1+\beta)
  \max_{u \in E, \|\nabla u\|=1} \|u - P_h u\|
  \le (1+\beta) {\lambda_N} C_h^2~.
\end{equation*}
Here, $\delta_b(E,E_h)$ is the $L^2$ norm directed distance  to measure the distance between $E$ and its approximate eigenspace $E_h$; 
$\beta$ is a quantity related to the cluster width and the gap between the 
cluster $\mathcal{C}$  and the rest 
of the spectrum;
$C_h$ is a quantity  with 
an explicitly known or computable value that comes from
the {\em a priori} error estimation for the projection operator $P_h$. The proposed 
estimate
of quantity $\beta$ can be regarded as an improvement of \cite{CarGed2011}; see the comparison in Remark \ref{remark:comparison_carsten_gedick}. Also, the obtained explicit bounds are consistent with the standard  qualitative analysis for the eigenfunction approximations; see the discussion in Remark \ref{remark:convergence-rate}.

\medskip

To evaluate the bounds on eigenfunctions, suitable lower and upper bounds on eigenvalues are required. In this paper, we assume that sufficiently accurate two-sided bounds on eigenvalues are available, although we admit that computing guaranteed eigenvalue bounds, especially from below, is not a simple task. 
We use the recent method \cite{Liu2015} based on the explicitly know interpolation constant for the Crouzeix--Raviart finite element method; see also, \cite{LiuOis2013,CarGal2014,CarGed2014}. This method provides lower bounds on eigenvalues and we further use the Lehmann--Goerisch method \cite{Lehmann1949,Lehmann1950,GoeHau1985} to compute their high-precision improvements.



Let us note that there is a vast literature on error estimates for symmetric elliptic eigenvalue problems. Classical works \cite{Chatelin1983,BabOsb:1991,Boffi:2010} provide the fundamental theories. 
Many existing {\em a posteriori} error bounds on eigenvalues contain unknown constants or are valid asymptotically; see, e.g., \cite{DurGasPad1999,ArmDur2004,Yang2010,MehMie2011,DarDurPad2012,GiaHal2012,JiaCheXie2013,HuHuaLin2014}.
In the last years, several results providing fully computable and guaranteed a posteriori error estimates for eigenvalues appeared; see \cite{CarGal2014,CarGed2014,Liu2015,LiuOis2013,SebVej2014,Vejchodsky2018b,Vejchodsky2018,carstensen2021direct}.
These estimates contain no unknown constants and bound eigenvalues on all meshes, not only asymptotically.

In particular, the general framework proposed in \cite{Liu2015} was applied to the Stokes eigenvalue problem \cite{Xie2LIU-2018}, Steklov eigenvalue problem \cite{you-xie-liu-2019}, and biharmonic operators related to the quadratic interpolation error constants \cite{liu-you:2018,LiaoYuLiu2019}.
The series of papers \cite{CanDusMadStaVoh2017,CanDusMadStaVoh2018,CanDusMadStaVoh2020} provides guaranteed, robust, and optimally convergent {\em a posteriori} bounds for eigenvalues and even for corresponding eigenfunctions for both conforming and nonconforming approximations. The last paper in the series solves the difficult case of multiple and tightly clustered eigenvalues. 
The recent work \cite{LiuVej2022} proposes two algorithms to handle multiple and tightly clustered eigenvalues as well and provides alternative guaranteed and fully computable error bounds for eigenfunctions. 
Particularly, the residual error-based  Algorithm II of \cite{LiuVej2022} provides high-precision bounds by successfully extending  the Davis--Kahan theorem to weakly formulated eigenvalue problems.



\medskip

The rest of the paper is organized as follows.
Section~\ref{se:eigenproblem} briefly recalls the Laplace eigenvalue problem, its discretization by the finite element method, and division of the spectrum into clusters. 
Section~\ref{se:optimalL2} derives a project error-based bound for finite element eigenfunctions in the $L^2$ {sense}. 
Section~\ref{se:numex} presents the results of two numerical examples and Section~\ref{se:conclusions} draws the conclusions. 
Below is url of the online demonstration: 
\begin{center}

\url{https://ganjin.online/xfliu/EigenfunctionEstimation4FEM}

\end{center}

\section{Laplace eigenvalue problem}
\label{se:eigenproblem}

Let us considers the Laplace eigenvalue problem to find eigenvalues $\lambda_i \in \R$ and corresponding eigenfunctions $u_i \neq 0$ such that
\begin{equation}
\label{eq:modpro}
  -\Delta u_i = \lambda_i u_i \quad\text{in }\Omega,
  \qquad
  u_i = 0 \quad\text{on }\partial\Omega,
\end{equation}
where $\Omega \subset \R^d$ is a bounded, Lipschitz $d$-dimensional domain.
The weak formulation of this eigenvalue problem reads: find $\lambda_i \in \R$ and $u_i \in H^1_0(\Omega) \setminus \{0\}$ such that
\begin{equation}
  \label{eq:weakf}
  (\nabla u_i, \nabla v) = \lambda_i (u_i,v) \quad \forall v \in H^1_0(\Omega),
\end{equation}
where $H^1_0(\Omega)$ is the usual Sobolev space of square integrable functions with the square integrable gradients and with zero traces on the boundary $\partial\Omega$; and $(\cdot,\cdot)$ stands for the $L^2(\Omega)$ inner product.

The Laplace eigenvalue problem is well studied in \cite{BabOsb:1991,Boffi:2010}.
There exists a countable sequence of eigenvalues
$$
  0 < \lambda_1 \leq \lambda_2 \leq \cdots,
$$
where we repeat each eigenvalue according to its multiplicity.
The corresponding eigenfunctions $u_i \in H^1_0(\Omega)$ are assumed to be normalized such that
$$
  (u_i,u_j) = \delta_{ij}, \quad i,j = 1,2, \dots.
$$


We discretize problem \eqref{eq:weakf} by the standard conforming finite element method. For simplicity, we assume $\Omega$ to be a polytope.
We consider the usual conforming simplicial mesh
$\cT_h$ in $\Omega$ and define the finite element space $V_h$ of piece-wise polynomial and continuous functions over the mesh 
$\cT_h$ satisfying the Dirichlet boundary conditions as
$$
V_h =\{v_h  \in H^1_0(\Omega)  :  {v_h} |_K \in \mathbb{P}_p(K) \text{ for all } K \in \cT_h \},
$$
where $\mathbb{P}_p(K)$ stands for the space of polynomials of degree at most $p$ defined in $K$.

The finite element eigenvalue problem reads:
find $\lambda_{h,i}\in\R$ and $u_{h,i} \in V_h\setminus\{0\}$ such that
\begin{equation}
\label{eq:eig_pro_with_fem}    
(\nabla u_{h,i,} \nabla v_h) = \lambda_{h,i} (u_{h,i}, v_h)\quad \forall v_h \in V_h,
\end{equation}
where $i=1,2,\dots,\operatorname{dim} (V_h)$. 
Discrete eigenfunctions are assumed to be normalized such that $(u_{h,i},u_{h,j})=\delta_{ij}$ and $(\nabla u_{h,i},\nabla u_{h,j})=\lambda_{h,i} \delta_{ij}$.

As we mentioned in the introduction, {we will formulate the $L^2$ error bound on eigenfunctions for clusters of eigenvalues.} For the purpose of the theory, the splitting of the spectrum into clusters can be arbitrary. 
Let $n_k$ and $N_k$ stand for indices of the first and the last eigenvalue in the $k$th cluster; see Figure~\ref{fi:clusters}. 
{Note that eigenvalues in a cluster need not equal to each other}.
We consider the $k$th cluster to be of interest, and set $n=n_k$ and $N=N_k$ 
to simplify the notation.
Let $E_k$ be the space of exact eigenfunctions associated to 
$k$th cluster of eigenvalues:
$$
E_k = \linspan\{ u_{n}, u_{n+1}, \dots, u_{N} \}
~.
$$
Similarly, finite element approximations $u_{h,i} \in H^1_0(\Omega)$ 
of exact eigenfunctions $u_i$, {for $i=n,n+1,\dots,N$}, form the corresponding approximate space:
$$
E_{h,k} = \linspan\{  u_{h,n}, u_{h,n+1}, \dots,  u_{h, N} \}~.
$$


\begin{figure}[t]
\begin{tikzpicture}[scale=1]
\newcommand{\tlen}{0.1}
\newcommand{\tick}[1]{\draw [semithick] (#1,-\tlen)--(#1,\tlen);}
\draw [semithick] (0,0)--(6.2,0);
\draw [thick,dotted] (6.2+0.2,0)--(6.2+1.3,0);
\draw [thick] (6.2+1.5,0)--(6.2+7,0);

\tick{0.5}\node [below] at (0.5,-\tlen) {$0$};

\tick{1.7}\node [above] at (1.7,\tlen) {$\lambda_{n_1}$};
\tick{1.9}
\tick{2.5}\node [above] at (2.3,\tlen) {$\cdots$};
\tick{2}
\tick{2.7}
\node [above] at (2.9,\tlen) {$\lambda_{N_1}$};
\node [below] at (2.25,-\tlen) {$1$st cluster};

\tick{4}\node [above] at (4,\tlen) {$\lambda_{n_2}$};
\tick{4.25}
\tick{4.6}\node [above] at (4.7,\tlen) {$\cdots$};
\tick{5.1}
\tick{5.2}
\node [above] at (5.3,\tlen) {$\lambda_{N_2}$};
\node [below] at (4.75,-\tlen) {$2$nd cluster};

\tick{9.5}\node [above] at (9,\tlen) {$(\lambda_n:=)\lambda_{n_k}$};
\tick{9.65}
\tick{10}
\tick{10.1}
\tick{10.35}
\tick{10.5}
\tick{10.7}
\tick{10.8}
\tick{11}\node [above] at (11.75,\tlen) {$\lambda_{N_k}(=:\lambda_{N})$};
\node [below] at (10.5,-\tlen) {$k$th cluster};
%
\end{tikzpicture}
\caption{Clusters of eigenvalues on the real axis.}
\label{fi:clusters}
\end{figure}

Denoting by $\|\cdot\|$ the $L^2(\Omega)$ norm, the directed distances of spaces measured in the energy and $L^2$ norms are defined as follows.
\begin{equation}
\label{eq:Delta}
\delta_a(E_k, E_{h,k}) = \max_{\substack{v \in E_k\\ \|\nabla v \|=1}} \min_{ v_h \in E_{h,k}} \|\nabla v - \nabla v_h \|
,\quad
\delta_b(E_k, E_{h,k}) = \max_{\substack{v \in E_k\\ \| v \|=1}} \min_{ v_h \in E_{h,k}} \| v- v_h \|.
\end{equation}


For reader's convenience and for the later reference, we recall the recent error bounds from \cite{LiuVej2022}. 
Take $\rho$ such that 
$\lambda_n < \rho \leq \lambda_{N+1}$, then
{
\begin{align}
  \label{eq:Deltaest}
  \delta_a^2(E_k,E_{h,k})
  &\leq  {\frac{\rho (\hat\lambda^{(k)}_N -  \lambda_n)  + \lambda_n \hat\lambda^{(k)}_N \vartheta^{(k)}}{\hat\lambda^{(k)}_N(\rho - \lambda_n)}}
   \quad\text{and}
\\
  \label{eq:deltaest}
  \delta_b^2(E_k,E_{h,k})
  &\leq {\frac{\hat\lambda^{(k)}_N - \lambda_n + \theta^{(k)}}{\rho - \lambda_n}},
\end{align}
where
\begin{align*}
  \hat\lambda^{(k)}_N &= \max_{v_h \in E_{h,k}} \frac{\norm{\nabla v_h}^2}{\norm{v_h}^2},
\quad
  \vartheta^{(k)} = \sum_{\ell=1}^{k-1} \left( \frac{\rho}{ \lambda_{n_\ell} } - 1 \right) \left[ \hat{\zeta}(E_{h,\ell},E_{h,k}) + \delta_a(E_\ell,E_{h,\ell}) \right]^2, 
\\
  \theta^{(k)} &= \sum_{\ell=1}^{k-1} \left(\rho - \lambda_{n_\ell}\right) \left[ \hat{\varepsilon}(E_{h,\ell},E_{h,k}) + \delta_b(E_\ell,E_{h,\ell}) \right]^2.
\end{align*}
Note that quantities
\begin{equation*}
  \hat{\zeta}(E_{h,\ell},E_{h,k}) = \max_{\substack{v\in E_{h,\ell}\\ \|\nabla v\|=1}} \max_{\substack{w\in E_{h,k}\\ \|\nabla w\|=1}} (\nabla v, \nabla w),
\quad
  \hat{\varepsilon}(E_{h,\ell},E_{h,k}) = \max_{\substack{v\in E_{h,\ell}\\ \| v\|=1}} \max_{\substack{w\in E_{h,k}\\ \|w\|=1}} ( v , w )
\end{equation*}
}%
%
measure the non-orthogonality between spaces of approximate eigenfunctions for the previous clusters and can be easily computed by using \cite[Lemma~2]{LiuVej2022}.
{Further}
{note that in \cite{LiuVej2022}, the approximate eigenfunction $\{u_{h,i}\}$ 
are considered as 
arbitrary and the orthogonality of $\{u_{h,i}\}$ is not required.}

\section{Projection error-based estimate in the $L^2$ norm}
\label{se:optimalL2}

The result of \cite[Theorem 8.1]{Boffi:2010}, and the explicitly known value of the constant in the \emph{a priori} error estimate for the energy projection \cite{LiuOis2013} enable us to mimic this approach for the eigenvalue problem and derive an optimal order convergent guaranteed and fully computable upper bound on the directed distance of the exact and approximate spaces of eigenfunctions measured in the $L^2(\Omega)$-norm.

First, we mention that $u_{h,i}$ is not available in practical computation, in general, because it is a result of a
generalized matrix eigenvalue solver polluted typically by rounding errors and truncation errors of iterative algorithms. 
In principle, 
{we could apply the interval arithmetic to have a rigorous representation of $u_{h,i}$,}
{but such} argument would make the paper lengthy and not easy to read. Therefore, we concentrate here on a theoretical analysis of the discretization error $(u_{h,i}-u_i)$,
{where $u_{h,i}$ is the exact solution of the discrete problem \eqref{eq:eig_pro_with_fem}.}

For the reader's convenience, we recall several results about the \emph{a priori} error estimates for finite element solutions of the Poisson equation. These \emph{a priori} error estimates will play an important role in subsequent error bounds for eigenfunctions.

Given $f\in L^2(\Omega)$, let $u\in H_0^1(\Omega)$ be the weak solution of the Poisson problem satisfying
$$
(\nabla u, \nabla v) = (f,v) \quad \forall v \in H^1_0(\Omega).
$$
The corresponding Galerkin approximation $u_h \in V_h(\subset H^1_0(\Omega))$ is determined by the identity
$$
(\nabla u_h, \nabla v_h) = (f,v_h) \quad \forall v_h \in V_h.
$$
The energy projector $P_h : H_0^1(\Omega) \rightarrow V_h$ is defined by 
$(\nabla (u - P_h u), \nabla v_h) = 0$ for all $v_h \in V_h$. Clearly,  $u_h = P_h u$.

In \cite{LiuOis2013}, Liu proposed the following constructive \emph{a priori} error estimate with a computable constant $C_h$:
\begin{equation}
\label{eq:Ch}
\|\nabla(u-P_h u) \| \le C_h \|f\|, \quad \| u - P_h u \| \le C_h \| \nabla(u-P_h u) \| \le C_h^2 \|f\|
\end{equation}
and the following lower eigenvalue bounds:
\begin{equation}
\lambda_{k} \ge \frac{\lambda_{h,k}}{1+C_h^2 \lambda_{h,k}}~\quad (k=1, 2,\cdots, \operatorname{dim}(V_h)).    
\end{equation}
%
In case of non-convex domains, the value of $C_h$ can be computed by solving a dual saddle-point problem based on the hypercircle method; see \cite[Sections 3.2--3.3]{LiuOis2013}. 
In case of convex domains, the value of $C_h$ can be easily computed by considering the Lagrange interpolation error constant; see \cite[Theorem~3.1]{LiuOis2013}. 
The specific value of $C_h$ is provided below in Section~\ref{se:numex} for the considered examples.

Throughout this section, we consider an arbitrary cluster of eigenvalues $\lambda_n, \lambda_{n+1}, \dots, \lambda_N$.
We denote by 
$\mathcal{C} = \{n, n + 1, \dots, N\}$
the set of indices of eigenvalues in this cluster
and by $|\mathcal{C}| = N - n + 1$ their number.
Spaces of exact and finite element eigenfunctions corresponding to this clusters are 
$E = \linspan\{ u_i : i \in \mathcal{C} \}$ and
$E_h = \linspan\{ u_{h,i} : i \in \mathcal{C} \}$, respectively.

It is also assumed that 
\begin{equation}    \label{eq:no_overlap_of_eigs}
\lambda_{h,n-1} < \lambda_n, \quad \lambda_N < \lambda_{h,N+1}.
\end{equation}
Such an assumption makes it possible to define the following quantities:
\begin{equation*}
\tau = \max_{j \in \mathcal{C}}\max_{i {\in}\mathcal{I} \setminus \mathcal{C}}\frac{\lambda_j}{|\lambda_{h,i}-\lambda_j|}
,\quad
\tau_h = \max_{j \in \mathcal{C}}\max_{i {\in}\mathcal{I} \setminus \mathcal{C}}\frac{\lambda_{h,i}}{|\lambda_{h,i}-\lambda_j|},
\end{equation*}
where $\mathcal{I} = \{1,2,\dots,\operatorname{dim} (V_h)\}$ stands for the set of all indices.
These quantities extend the one in \cite[pages 53, 57]{Boffi:2010} and have their origin in \cite{raviart1983introduction}. 
The application of the quantity $\tau $ can be found in \cite[Prop.~3.1]{CarGed2011}. The result in  Lemma~\ref{thm:u_and_pi_k_u} can be regarded an improvement of the one of \cite{CarGed2011}.

To derive 
the projection error-based 
upper bound 
on the directed distance of the exact and approximate spaces of eigenfunctions measured in the $L^2(\Omega)$-norm
by applying estimates \eqref{eq:Ch}, we need to
bound the error of the $L^2(\Omega)$ orthogonal projection $\Pi^\mathcal{C}_h: H^1_0(\Omega) \rightarrow E_h$ 
by the error of the energy projection $P_h: H^1_0(\Omega) \rightarrow V_h$. To achieve this goal, we first introduce several quantities and two auxiliary lemmas. 


\medskip


Let us introduce the unit ball $E^B := \{ u \in E : \|u\|=1 \}$.
For the given cluster of eigenfunction, we introduce $\beta$ 
as the optimal (minimal) quantity  that makes the 
inequality 
\begin{equation}
 \label{def:beta}
 \|(I-\Pi^\mathcal{C}_h) P_h u\| \le 
\beta 
  \max_{\substack{v \in E^B}} \|v - P_h v\| \quad 
  \mbox{hold for any } u\in E^B
 ~,
\end{equation}
and aim to obtain an upper bound of $\beta$.
In case $\|u - P_h u\|=0$ for all $u\in E^B$, it is natural to define $\beta=0$. 
Given $u =\sum_{j\in \mathcal{C}} c_j u_j \in E^B$,
let $\kappa:E^B \to E^B$ be the mapping such that
\begin{equation}
	\label{eq:def-kappa-h}
\kappa u = 
\overline{\lambda}^{-1}
\sum_{j\in \mathcal{C}} {c_j \lambda_j}  u_j,\quad 
\text{where}\quad
{\overline{\lambda}^2} =\sum_{j\in \mathcal{C}}c_j^2 \lambda_j^2 .
\end{equation}
It is easy to see that $\kappa:E^B \to E^B$ is bijective.
We set $\overline u = \kappa u$, define the relative width $\xi := (\lambda_N - \lambda_n)/\lambda_n$ of the eigenvalue cluster of interest, and note that the following estimate holds:
$$
\|u-\overline{u}\|^2 = \sum_{j\in \mathcal{C} } c_j^2 (1-\lambda_j/\overline{\lambda})^2 \le \xi^2. 
$$

\begin{lemma}\label{thm:u_and_pi_k_u}
Given an arbitrary clusters of eigenvalues, 
the quantity $\beta$ satisfies
\begin{equation}
    \label{eq:est_of_beta_k}
    \beta \leq \tau  |\mathcal{C}|^{1/2}.
\end{equation}
Further, if 
\begin{equation}
    \label{eq:condition_for_lemma}
\tau_h\xi < 1 - |\mathcal{C}|^{-1/2}
\end{equation}
then 
\begin{equation}
    \label{eq:est_of_beta_k_sharper}
    \beta \leq \frac{\tau}{1-\tau_h\xi} ~.
\end{equation}

\end{lemma}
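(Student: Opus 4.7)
The plan is to expand $P_h u$ in the $L^2$-orthonormal basis $\{u_{h,i}\}$ of discrete eigenfunctions so that $\|(I-\Pi_h^\mathcal{C}) P_h u\|^2 = \sum_{i \notin \mathcal{C}} \alpha_i^2$ with $\alpha_i = (P_h u, u_{h,i})$. Writing $u = \sum_{j \in \mathcal{C}} c_j u_j$ (so $\sum c_j^2 = 1$) and setting $M := \max_{v \in E^B}\|v - P_h v\|$, one combines the weak forms $(\nabla u_j, \nabla u_{h,i}) = \lambda_j(u_j, u_{h,i})$ and $(\nabla u_{h,i}, \nabla P_h u_j) = \lambda_{h,i}(u_{h,i}, P_h u_j)$ with the Galerkin orthogonality $(\nabla u_{h,i}, \nabla P_h u_j) = (\nabla u_{h,i}, \nabla u_j)$ to obtain the workhorse identity
\[
\alpha_i = \sum_{j \in \mathcal{C}} c_j \frac{\lambda_j}{\lambda_{h,i}-\lambda_j} (u_j - P_h u_j, u_{h,i}), \qquad i \notin \mathcal{C}.
\]
The coarse bound \eqref{eq:est_of_beta_k} then follows directly from Cauchy--Schwarz in $j$, the pointwise inequality $\lambda_j/|\lambda_{h,i}-\lambda_j| \le \tau$, and Bessel's inequality $\sum_i (u_j - P_h u_j, u_{h,i})^2 \le \|u_j - P_h u_j\|^2 \le M^2$; the Cauchy--Schwarz across the $|\mathcal{C}|$ cluster indices is precisely what contributes the factor $|\mathcal{C}|^{1/2}$.

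For the sharper bound \eqref{eq:est_of_beta_k_sharper} the cluster structure must be exploited through $\bar u = \kappa u$. Since $\sum_j c_j \lambda_j u_j = \bar\lambda \bar u$, the sum over $j$ collapses and the identity becomes
\[
\alpha_i = \frac{\bar\lambda}{\lambda_{h,i}-\bar\lambda}(\bar u - P_h u, u_{h,i}), \qquad i \notin \mathcal{C},
\]
with $|\bar\lambda/(\lambda_{h,i}-\bar\lambda)| \le \tau$ because $\bar\lambda \in [\lambda_n, \lambda_N]$ and $\lambda \mapsto \lambda/|\lambda_{h,i}-\lambda|$ is monotone on each side of $\lambda_{h,i}$. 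I would then decompose $\bar u - P_h u = (\bar u - P_h \bar u) + P_h(\bar u - u)$. Bessel combined with $\bar u \in E^B$ bounds the first summand's contribution by $\tau M$. For the second, $\bar u - u \in E$ with $\|\bar u - u\| \le \xi$, so applying the defining property of $\beta$ to $(\bar u - u)/\|\bar u - u\| \in E^B$ and scaling yields $\|(I-\Pi_h^\mathcal{C})P_h(\bar u - u)\| \le \xi \beta M$. Combining via Minkowski and taking the supremum over $u \in E^B$ delivers the self-referential inequality $\beta \le \tau + \tau_h \xi \beta$, whose solution is $\beta \le \tau/(1-\tau_h\xi)$.

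The key technical step, and the main obstacle, is getting the contraction coefficient to be $\tau_h \xi$ rather than the $\tau \xi$ that a blunt application of $|\mu_i| \le \tau$ with $\mu_i = \bar\lambda/(\lambda_{h,i}-\bar\lambda)$ would yield. The trick is to rewrite $\mu_i (P_h(\bar u - u), u_{h,i})$ using the Galerkin identity $\lambda_{h,i}(P_h(\bar u - u), u_{h,i}) = (\nabla(\bar u - u), \nabla u_{h,i})$, which places a $\lambda_{h,i}$ in the numerator and allows the sharper pointwise bound $\lambda_{h,i}/|\lambda_{h,i}-\bar\lambda| \le \tau_h$ to be invoked in place of the $\bar\lambda$-based one. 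Finally, the hypothesis $\tau_h\xi < 1 - |\mathcal{C}|^{-1/2}$ plays two roles: it keeps the denominator $1-\tau_h\xi$ positive, and it guarantees $\tau/(1-\tau_h\xi) < \tau|\mathcal{C}|^{1/2}$, so that the sharper bound is a genuine improvement over \eqref{eq:est_of_beta_k}.
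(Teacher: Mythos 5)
Your proof of the coarse bound \eqref{eq:est_of_beta_k} is correct and is essentially the paper's argument (the paper derives $\|(I-\Pi_h^\mathcal{C})P_hu_j\|\le\tau\|(I-P_h)u_j\|$ for each eigenfunction $u_j$ and then combines over $j\in\mathcal{C}$; you combine in the opposite order, which is equivalent).

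For the sharper bound \eqref{eq:est_of_beta_k_sharper} there is a genuine gap at exactly the point you flag as ``the key technical step.'' Your identity $\alpha_i=\frac{\bar\lambda}{\lambda_{h,i}-\bar\lambda}(\bar u-P_hu,u_{h,i})$ and the splitting $\bar u-P_hu=(\bar u-P_h\bar u)+P_h(\bar u-u)$ are fine, but they leave the coefficient $\bar\lambda/(\lambda_{h,i}-\bar\lambda)$, bounded only by $\tau$, on the $P_h(\bar u-u)$ term. The proposed repair via $\lambda_{h,i}(P_h(\bar u-u),u_{h,i})=(\nabla(\bar u-u),\nabla u_{h,i})$ does not do what you claim: substituting it gives $\frac{\bar\lambda}{\lambda_{h,i}(\lambda_{h,i}-\bar\lambda)}(\nabla(\bar u-u),\nabla u_{h,i})$, i.e.\ it puts $\lambda_{h,i}$ in the \emph{denominator} and, worse, replaces the $L^2$ pairing by an $H^1$ pairing, so the defining property of $\beta$ can no longer be invoked to close the recursion. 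Note also that the pointwise bound $\bar\lambda/|\lambda_{h,i}-\bar\lambda|\le\tau_h$ is simply false in general (e.g.\ when the binding spectral gap lies below the cluster one can have $\tau>\tau_h$), so there is no cheap way to trade $\tau$ for $\tau_h$ on that term. What your argument actually proves is $\beta\le\tau+\tau\xi\beta$, hence $\beta\le\tau/(1-\tau\xi)$ whenever $\tau\xi<1$ --- a correct statement, but not the lemma: neither $\tau/(1-\tau\xi)$ nor $\tau/(1-\tau_h\xi)$ dominates the other in general, and the hypothesis \eqref{eq:condition_for_lemma} does not guarantee $\tau\xi<1$.

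The paper avoids this by solving for $(P_h\bar u,u_{h,i})$ instead of $(P_hu,u_{h,i})$: starting from $\lambda_{h,i}(P_hu,u_{h,i})=\bar\lambda(\bar u,u_{h,i})$ it subtracts $\bar\lambda(P_h\bar u,u_{h,i})$ (not $\bar\lambda(P_hu,u_{h,i})$), which yields
$(P_h\bar u,u_{h,i})=\frac{\bar\lambda}{\lambda_{h,i}-\bar\lambda}(\bar u-P_h\bar u,u_{h,i})-\frac{\lambda_{h,i}}{\lambda_{h,i}-\bar\lambda}(P_h(u-\bar u),u_{h,i})$,
so the factor $\lambda_{h,i}/(\lambda_{h,i}-\bar\lambda)\le\tau_h$ appears naturally while the $L^2$ pairing with $P_h(u-\bar u)$ is preserved. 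This bounds $\|(I-\Pi_h^\mathcal{C})P_h\bar u\|$ rather than $\|(I-\Pi_h^\mathcal{C})P_hu\|$, and the final step --- which your write-up omits entirely --- is that $\kappa:E^B\to E^B$ is a bijection, so $\max_{u\in E^B}\|(I-\Pi_h^\mathcal{C})P_h\kappa u\|=\max_{u\in E^B}\|(I-\Pi_h^\mathcal{C})P_hu\|=\beta M$, which is what closes the inequality $\beta\le\tau+\tau_h\xi\beta$. Your reading of the two roles of hypothesis \eqref{eq:condition_for_lemma} is correct.
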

\begin{remark}
Note that if condition \eqref{eq:condition_for_lemma} is satisfied than the estimate \eqref{eq:est_of_beta_k_sharper} is always sharper then the bound \eqref{eq:est_of_beta_k}.
Further, a smaller relative width of the cluster $\xi$  leads to a sharper upper bound \eqref{eq:est_of_beta_k_sharper}.
 In the extreme case of a multiple eigenvalue such that $\lambda_n=\lambda_N$, we have $\xi = 0$ and hence $\beta \le \tau$.
\end{remark}

\begin{proof}
{First, for 
 $u_j \in E^B$ as an eigenfunction, let us apply the standard argument (see, e.g., \cite{Boffi:2010}) to show that
 $\|(I - \Pi^\mathcal{C}_h) P_h u_j\| \le \tau \| (I - P_h) u_j\|$. }
 Note that
$$
(I - \Pi^\mathcal{C}_h) P_h u_j = \sum_{i \in \mathcal{I} \setminus \mathcal{C}} (P_h u_j, u_{h,i}) u_{h,i} \in V_h,
$$
which leads to 
\begin{equation}
\label{eq:IPik}
\|(I  - \Pi^\mathcal{C}_h ) P_h u_j\|^2 = \sum_{i \in \mathcal{I} \setminus \mathcal{C}} (P_h u_j, u_{h,i})^2\:.
\end{equation}
In equality
\begin{equation}
    \label{eq:equality_in_lemma}
\lambda_{h,i} (P_h u_j , u_{h,i}) = (\nabla P_h u_j, \nabla u_{h,i})
= (\nabla u_j, \nabla u_{h,i}) = \lambda_j (u_j,u_{h,i}),
\end{equation}
we subtract $\lambda_j (P_h u_j,u_{h,i})$ on both sides and obtain
$$
(P_h u_j, u_{h,i}) = \frac{\lambda_j}{\lambda_{h,i} -\lambda_j } (u_j - P_h u_j,u_{h,i}).
$$
Summation over $i \in \mathcal{I}\setminus\mathcal{C}$ gives
$$
\sum_{i \in \mathcal{I} \setminus \mathcal{C}} (P_h u_j, u_{h,i})^2 \le \tau^2 \sum_{i \in \mathcal{I} \setminus \mathcal{C}} (u_j - P_h u_j,u_{h,i})^2 \le \tau^2 \|u_j - P_h u_j\|^2,
$$
where the last inequality follows form the identity $\sum_{i \in \mathcal{I}} (u_j - P_h u_j,u_{h,i})^2 = \| \Pi_h(u_j - P_h u_j) \|^2$ with $\Pi_h : H^1_0(\Omega) \rightarrow V_h$ denoting the $L^2(\Omega)$ orthogonal projector.
Using this in \eqref{eq:IPik}, we finally derive 
\begin{equation}
\label{eq:est_for_single_uj}
\|(I - \Pi^\mathcal{C}_h) P_h u_j\| \le \tau \| (I - P_h) u_j\|.
\end{equation}


Next, 
we consider any $u \in E^B$ and express it in the form $u =\sum_{j\in \mathcal{C}} c_j u_j$ with $\sum_{j\in\mathcal{C}} c_j^2 = 1$. 
Denoting the linear operator $(I - \Pi^\mathcal{C}_h) P_h $ by $L$,
the estimate \eqref{eq:est_for_single_uj} leads to
$$
\|L u \|^2 = \left\| \sum_{j \in \mathcal{C}}  c_j L u_j \right\|^2 \le \sum_{j \in \mathcal{C}} \|Lu_j\|^2    \le \tau^2 \sum_{j \in \mathcal{C}}  \| (I - P_h) u_j\|^2.
$$
Thus, we can estimate $\|(I - \Pi^\mathcal{C}_h) P_h u\|$ as
\begin{equation}
\label{eq:lemma_estimate_for_general_u_ver0}
\|(I - \Pi^\mathcal{C}_h) P_h u \| \le  
 \tau  |\mathcal{C}|^{1/2} \max_{u \in E^B} \|u - P_h u\|
\end{equation}
and statement \eqref{eq:est_of_beta_k} easily follows.

Finally, we consider the case when the condition \eqref{eq:condition_for_lemma} holds true. 
Given $u\in E^B$, we take $\overline{u}=\kappa u$ and $\overline{\lambda}$ as defined in \eqref{eq:def-kappa-h}.
%
Using inequality \eqref{eq:equality_in_lemma}, we obtain for $u$ the identity
$$
\lambda_{h,i} (P_h u , u_{h,i}) 
= \overline{\lambda} (\overline{u},u_{h,i}).
$$
Subtracting $\overline{\lambda} (P_h \overline{u},u_{h,i})$ on both sides, we derive
$$
\left( \lambda_{h,i} P_h (u - \overline{u}) 
+ (\lambda_{h,i}
 -\overline{\lambda}) P_h \overline{u} , u_{h,i} \right) = \overline{\lambda} (\overline{u} -P_h \overline{u} ,u_{h,i}).
$$
Thus, 
$$
(P_h \overline{u}, u_{h,i}) = \frac{\overline{\lambda}}{\lambda_{h,i} -\overline{\lambda} } ( \overline{u} - P_h \overline{u},u_{h,i})
- 
\frac{\lambda_{h,i}}{\lambda_{h,i} -\overline{\lambda} } 
(P_h (u - \overline{u}), u_{h,i}).
$$
Since function $g(t)=t/ |\lambda_{h,i}-t|$ satisfies
$g(t) \le \max(g(\lambda_n), g(\lambda_N))$ for all $t \in [\lambda_n, \lambda_N]$
and function $g_h(t)=\lambda_{h,i}/|\lambda_{h,i}-t|$ is bounded in the same way, we have
$$
|(P_h \overline{u}, u_{h,i})| \leq \tau |\left(( I - P_h) \overline{u},u_{h,i}\right)| + \tau_h
|(P_h (u - \overline{u}), u_{h,i})|.
$$
Now, considering these inequalities for $i\in\mathcal{I}\setminus\mathcal{C}$, using the geometric inequality\footnote{
Given  vectors $a=(a_1, \cdots, a_n), b=(b_1, \cdots, b_n), c=(c_1, \cdots, c_n)$ with $a_i,b_i,c_i>0$ and
$a_i\leq b_i+c_i$, then their Euclidean norms satisfy
$\|a\| \le \|b\|+\|c\|.$
}
and the general fact that 
$\sum_{i\in \mathcal{I}\setminus\mathcal{C}} (\varphi,u_{h,i})^2
= \| (I - \Pi^\mathcal{C}_h) \Pi_h \varphi \|^2$ for any $\varphi \in H^1_0(\Omega)$ with $\Pi_h: H^1_0(\Omega) \rightarrow V_h$ being the $L^2(\Omega)$ orthogonal projector,  
we derive the bound
\begin{equation}
\label{eq:sub_inequality_1}    
\|(I-\Pi^\mathcal{C}_h)P_h {\overline{u}}\| \le 
\tau \|\overline{u}-P_h \overline{u}\| + 
\tau_h
\|(I-\Pi^\mathcal{C}_h)P_h (u - \overline{u})\|.
\end{equation}
%
Since $\|u-\overline{u}\| \le \xi $, the definition of $\beta$ gives
\begin{equation}
    \label{eq:sub_inequality_2}
\|(I-\Pi^\mathcal{C}_h) P_h (u-\overline{u})\|
\le 
\xi \beta \max_{u \in E^B} \|u - P_h u\|.
\end{equation}
Inequalities \eqref{eq:sub_inequality_1} and  \eqref{eq:sub_inequality_2} lead to the relation
\begin{equation}
\label{eq:lemma_estimate_for_general_u_2}
\|(I - \Pi^\mathcal{C}_h) P_h \overline{u} \| \le  
\left( \tau   + \tau_h \xi  \beta \right)
 \max_{u \in E^B} \|u - P_h u\|.
\end{equation}
Since $\overline u = \kappa u$ and $\kappa: E^B \to E^B$ is a bijection, we have
$$
  \max_{u\in E^B} \|(I - \Pi^\mathcal{C}_h) P_h \kappa u \| 
= \max_{u\in E^B} \|(I - \Pi^\mathcal{C}_h) P_h u \|.
$$
Consequently, 
from the bound \eqref{eq:lemma_estimate_for_general_u_2} and the definition of $\beta$, we obtain
$$
\beta \le  \tau   + \tau_h \xi \beta.
$$
%
Since condition \eqref{eq:condition_for_lemma} implies $1-\tau_h\xi>0$, the estimate \eqref{eq:est_of_beta_k_sharper} follows.
\end{proof}

In next lemma, we show the relation between $\delta_b(E,E_h)$ and the projection error using the quantity $\beta$. 

\begin{lemma}\label{thm:u_and_pi_k_u_new_ver}
For the given cluster of eigenvalues, 
the following estimate holds:
\begin{equation}
\label{eq:thm_u_and_pi_k_u_new_}
\delta_b(E,E_h) = \max_{u\in E^B} \|u-\Pi^\mathcal{C}_h u\| \le 
(1+\beta)
  \max_{u \in E^B} \|u - P_h u\|.
 \end{equation}
\end{lemma}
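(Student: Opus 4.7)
The plan is to reduce this inequality to a two-term triangle-inequality estimate, with one term controlled by the projection error on $E$ and the other by the definition of $\beta$ from \eqref{def:beta}. First, I would justify the equality in \eqref{eq:thm_u_and_pi_k_u_new_}: by definition of $\delta_b$ in \eqref{eq:Delta}, for fixed $u \in E^B$ the best $L^2$ approximation in $E_h$ is exactly the $L^2$-orthogonal projection $\Pi^\mathcal{C}_h u$, so $\min_{v_h \in E_h}\|u - v_h\| = \|u - \Pi^\mathcal{C}_h u\|$, and hence $\delta_b(E,E_h) = \max_{u \in E^B}\|u - \Pi^\mathcal{C}_h u\|$.

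Next, for an arbitrary $u \in E^B$, the element $\Pi^\mathcal{C}_h P_h u$ lies in $E_h$, so again by optimality of the orthogonal projection,
\begin{equation*}
  \|u - \Pi^\mathcal{C}_h u\| \le \|u - \Pi^\mathcal{C}_h P_h u\| \le \|u - P_h u\| + \|P_h u - \Pi^\mathcal{C}_h P_h u\|,
\end{equation*}
where the second step is the triangle inequality. The second summand equals $\|(I - \Pi^\mathcal{C}_h)P_h u\|$, which by the defining property \eqref{def:beta} of $\beta$ is at most $\beta \max_{v \in E^B}\|v - P_h v\|$. Clearly, $\|u - P_h u\| \le \max_{v \in E^B}\|v - P_h v\|$ as well.

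Combining these two bounds yields $\|u - \Pi^\mathcal{C}_h u\| \le (1+\beta)\max_{v \in E^B}\|v - P_h v\|$ for every $u \in E^B$; taking the maximum over $u \in E^B$ on the left produces \eqref{eq:thm_u_and_pi_k_u_new_}. There is no real obstacle here: the whole difficulty of the argument has been absorbed into Lemma \ref{thm:u_and_pi_k_u}, which provides the explicit control on $\beta$; the present lemma is essentially a one-step triangle-inequality argument combined with the optimality of the $L^2$ projection onto $E_h$.
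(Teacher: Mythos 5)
Your proof is correct and follows essentially the same route as the paper's: the optimality of the $L^2$ projection $\Pi^\mathcal{C}_h$ onto $E_h$, the triangle inequality through $\Pi^\mathcal{C}_h P_h u$, and the defining property \eqref{def:beta} of $\beta$. Your write-up is in fact slightly more explicit than the paper's, which compresses the last step into ``using the definition of $\beta$, we easily draw the conclusion.''
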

\begin{proof}
For any $u\in E^B$, since $\Pi^\mathcal{C}_h u$ provides the best approximation of $u$ under the $L^2$ norm in $E_h$, we have
\begin{equation}
\label{eq:lem_triangle_inequality_new_}
\|u - \Pi^\mathcal{C}_h u \| 
 \le \|u - \Pi^\mathcal{C}_h P_h u \| 
 \le \|u - P_h u \| + \|P_h u-\Pi^\mathcal{C}_h P_h u  \| .
\end{equation}
Using the definition of the quantity $\beta$, we easily draw the conclusion.
\end{proof}
\begin{remark}
\label{remark:comparison_carsten_gedick}
In Proposition 3.2 of \cite{CarGed2011}, with $m:=|\mathcal{C}|=N-n+1$, the following result is obtained: For $u_{h,j} $ in $E$ as an eigenfunction associated to $\lambda_{h,j}$, 
$$
\min_{u\in E} \|u_{h,j}-u\| \le 
\sqrt{2} m(2m+1)(1+\tau)
 \max_{u_j\in E^B} \|u_j-P_h u_j\|~.
$$
If such a result is applied to  $\delta_b(E,E_h)$, one can obtain the following estimate.
\begin{equation}
\label{eq:overestimation}    
\delta_b(E,E_h) \le \sqrt{2} m(2m+1)(1+\tau)   \max_{u \in E^B} \|u - P_h u\|~.
\end{equation}
This bound is larger than the result in Lemma~\ref{thm:u_and_pi_k_u_new_ver}.
Particularly, if
the eigenvalue cluster is tight, i.e., $\xi\approx 0$, we have $\beta\approx \tau$ and the bound 
\eqref{eq:overestimation} 
is overestimated by the factor $\sqrt{2} m(2m+1)$.
\end{remark}

Bounding $\beta$ by Lemma~\ref{thm:u_and_pi_k_u}, the following theorem presents
the main result.

\begin{theorem}\label{th:main_new_}
Let $\beta$ be the quantity defined in \eqref{def:beta}.
For an arbitrary cluster of eigenvalues, 
the following estimate holds:
\begin{equation}
\label{eq:l2_norm_optimal}
\delta_b(E,E_h)  \le (1+\beta) {\lambda_N} C_h^2~.
\end{equation}
\end{theorem}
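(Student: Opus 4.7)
The plan is to combine Lemma~\ref{thm:u_and_pi_k_u_new_ver} with the \emph{a priori} projection bound \eqref{eq:Ch}, so the only substantive work is bounding the quantity
$$
\max_{u \in E^B} \|u - P_h u\|
$$
by $\lambda_N C_h^2$. Since Lemma~\ref{thm:u_and_pi_k_u_new_ver} already supplies
$$
\delta_b(E,E_h) \le (1+\beta) \max_{u \in E^B} \|u - P_h u\|,
$$
the theorem will follow immediately once this projection error is controlled.

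To bound $\|u - P_h u\|$ for $u\in E^B$, I would first write $u = \sum_{j\in\mathcal{C}} c_j u_j$, where $\{u_j\}_{j\in\mathcal{C}}$ are the $L^2$-orthonormal exact eigenfunctions in the cluster, so that $\|u\|^2 = \sum_{j\in\mathcal{C}} c_j^2 = 1$. Because each $u_j$ satisfies $-\Delta u_j = \lambda_j u_j$ in the weak sense, the function $u$ is the weak solution of the Poisson problem with right-hand side $f := -\Delta u = \sum_{j\in\mathcal{C}} c_j \lambda_j u_j \in L^2(\Omega)$. By $L^2$-orthonormality of the eigenfunctions and monotonicity of the cluster eigenvalues,
$$
\|f\|^2 = \sum_{j\in\mathcal{C}} c_j^2 \lambda_j^2 \le \lambda_N^2 \sum_{j\in\mathcal{C}} c_j^2 = \lambda_N^2.
$$

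Now I would apply the \emph{a priori} estimate \eqref{eq:Ch} to $u$ and $P_h u$, which gives
$$
\|u - P_h u\| \le C_h^2 \|f\| \le C_h^2 \lambda_N.
$$
Taking the maximum over $u\in E^B$ preserves the bound, and substitution into the inequality of Lemma~\ref{thm:u_and_pi_k_u_new_ver} yields
$$
\delta_b(E,E_h) \le (1+\beta) \lambda_N C_h^2,
$$
which is the desired estimate. There is no genuine obstacle here; the only thing to verify carefully is that the Poisson problem interpretation of $u$ is legitimate (so that \eqref{eq:Ch} applies), which follows from $f\in L^2(\Omega)$ and $u\in H^1_0(\Omega)$ satisfying $(\nabla u,\nabla v)=(f,v)$ for all $v\in H^1_0(\Omega)$ by linearity of the weak eigenvalue identity \eqref{eq:weakf}.
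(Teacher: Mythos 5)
Your proposal is correct and follows essentially the same route as the paper: the paper also reduces the theorem to bounding $\max_{u\in E^B}\|u-P_hu\|$ via Lemma~\ref{thm:u_and_pi_k_u_new_ver}, identifies $u$ as the weak solution of the Poisson problem with right-hand side $f=\overline{\lambda}\overline{u}=\sum_{j\in\mathcal{C}}c_j\lambda_j u_j$ (your $f$, written with the map $\kappa$), and applies \eqref{eq:Ch} together with $\|f\|\le\lambda_N$. No substantive difference.
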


\begin{proof}
Given $u\in E_k$, $u = \sum_{j\in \mathcal{C}}c_j u_j$, let $\overline{u}=\kappa u$ and $\overline{\lambda}^2 = \sum_{j\in \mathcal{C}}c_j^2 \lambda_j^2$.
Then the identity 
$$
(\nabla u, \nabla v)= (\overline{\lambda}\overline{u}, v), \quad \forall v\in V,
$$
holds and the {\em a priori} error estimate \eqref{eq:Ch} with $f=\overline{\lambda}\overline{u}$ yields
\begin{equation}
    \label{eq:local_est_1_new}
    \|u-P_h u\| \le C_h^2 \| \overline{\lambda}\overline{u}\| \le C_h^2\lambda_N .
\end{equation}
Definition of $\delta_b(E,E_h)$ and bounds \eqref{eq:thm_u_and_pi_k_u_new_} and \eqref{eq:local_est_1_new} give
\begin{equation*}
    \label{eq:est2_new}
\delta_b(E,E_h) = \max_{u\in E^B} \|u-\Pi^\mathcal{C}_h u\|
  \leq (1+\beta) \max_{u \in E^B} \| u - P_h u \| \le (1+\beta) C_h^2 \lambda_N.
\end{equation*}
\end{proof}

\begin{remark}
\label{remark:convergence-rate}
The result \cite{LiuOis2013} shows how to compute the quantity $C_h$. For convex domains, the solution of the Poisson problem has the regularity $u\in H^2(\Omega)$ and, consequently, we have
$C_h=O(h)$ via the Lagrange interpolation error estimation. 
For non-convex domains, the solution $u$ belongs to 
$H^{1+\alpha}(\Omega)$, where $\alpha \in (0,1]$ depends on the angles of re-entrant non-convex corners. 
In this case, the value of $C_h$ is evaluated by the hypercircle method using the Raviart--Thomas FEM, and it is expected that 
$C_h=O(h^{\alpha})$.

 As {it} is pointed out in \cite[Theorem 9.13]{Boffi:2010}, the FEM solutions approximate the eigenfunction independently. 
 That is, even for non-convex domains, if an 
 eigenfunction has the $H^2$-regularity, then the FEM approximation to such an eigenfunction has the $O(h^2)$ convergence rate under $L^2$ norm.
Since the projection error in the estimation \eqref{eq:thm_u_and_pi_k_u_new_} is restricted to the function in $E$ for the specified eigenvalue cluster, the estimation of Lemma \ref{thm:u_and_pi_k_u_new_ver} is consistent with the theoretical analysis of \cite{Boffi:2010}.

The proposed estimation \eqref{eq:l2_norm_optimal} using $C_h$ has a defect that, in case of non-convex domains, for an eigenfunction with a better regularity, the proposed bound still keeps the degenerated convergence rate, which is because the {\em a priori} error estimation is considering the worst case for the projection error.  
 If the regularity for eigenfunction in $E$ is known, then the estimation in Theorem \ref{th:main_new_} can also be improved since the estimation only depends on the projection error for eigenfunctions in the specified cluster.
For example, in the case of an L-shaped domain of \S \ref{sec:l-shape},  
 the eigenfunction $u=\sin(\pi x)\sin (\pi y)$ associated to  $\lambda_3=2\pi^2$ has the 
$H^2$-regularity, thus one can take $C_h<0.493h$ (where $h$ is the largest leg length for right triangles in the triangulation) for FEM approximation using triangulation with right triangles. 
\end{remark}

%





\begin{remark}
Theorem 3 in \cite{LiuVej2022} provides the following estimate:
\begin{equation}
  \label{eq:energy_by_L2}
  \delta_a^2(E,E_h) \leq {2 - 2 \lambda_n \left( \frac{1 - \delta_b^2(E,E_h) }{\lambda_N \lambda_{h,N}} \right)^{1/2}} ~,
\end{equation}
where the energy error $\delta_a(E,E_h)$ {is bounded by} the $L^2$ error $\delta_b(E,E_h)$. 
However, bound \eqref{eq:energy_by_L2} is not optimal for clusters of a positive widths, i.e., $\lambda_n<\lambda_{N}$. On the other hand, for clusters consisting of a simple or a multiple eigenvalue, we have $\lambda_n = \lambda_N$ and the bound \eqref{eq:energy_by_L2} has the optimal speed of convergence. Indeed, in this case, it can be easily shown that the right-hand side of \eqref{eq:energy_by_L2} is dominated by $|\lambda_{h,N} - \lambda_N|$ and the other terms, including $\delta_b^2(E,E_h)$ are of higher order.
{Consequently, bound \eqref{eq:energy_by_L2} combined with \eqref{eq:l2_norm_optimal} provides a guaranteed and fully computable error bound in the energy norm with the optimal speed of convergence for a cluster consisting of only one simple or multiple eigenvalue.}
\end{remark}

\section{Numerical examples}
\label{se:numex}

This section provides numerical examples to  illustrate the accuracy of proposed bounds on the directed 
distances of spaces of exact and approximate eigenfunctions. The first example is the Laplace eigenvalue problem \eqref{eq:modpro} in the unit square domain for which 
{the exact eigenvalues and eigenfunctions are well known.} 
The second example is the same problem considered 
{in a non-convex L-shaped domain where eigenfunctions may have singularities at the re-entrant corner.}

Both examples are computed in 
the floating point arithmetic and the influence of rounding errors is not taken into account
{for simplicity.} 
However, if needed, mathematically rigorous estimates could be obtained by employing the interval arithmetic \cite{moore2009introduction}.

\subsection{The unit square domain}
\label{sec:unit_square}
Consider the Laplace eigenvalue problem with homogeneous Dirichlet boundary conditions in the unit square $\Omega=(0,1)^2$: 
find eigenvalues $\lambda_i \in \R$ and corresponding eigenfunctions $u_i \neq 0$ such that
\begin{equation}
\label{eq:modpro}
  -\Delta u_i = \lambda_i u_i \quad\text{in }\Omega;
  \qquad
  u_i = 0 \quad\text{on }\partial\Omega.
\end{equation}

\begin{table}[ht]
\caption{\label{ta:square_domain_clusters}The four leading clusters for the unit square.}
\begin{center}
\begin{tabular}{|c|c|c|c|c|}
\hline
\rule[-2mm]{0cm}{6mm}{}
 Cluster & 1 & 2 & 3 & 4 \\
 \hline
\rule[-2mm]{0cm}{6mm}{}
Eigenvalues & $\lambda_1 = 2\pi^2$ &
 $\lambda_2 = \lambda_3 = 5\pi^2$  &
 $\lambda_4 = 8\pi^2$   & 
 $\lambda_5 = \lambda_6 = 10\pi^2$  \\ 
\hline
\end{tabular}
\end{center}
\end{table}

The exact eigenpairs are known analytically to be
$$
\lambda_{ij} = (i^2+j^2)\pi^2,\quad u_{ij}=\sin(i\pi x) \sin(j\pi y), \quad i,j=1,2,3, \dots.
$$
These eigenvalues are either simple or multiple
and we cluster them according to the multiplicity.
The first four clusters are listed in Table~\ref{ta:square_domain_clusters}.
Since the exact eigenvalues are known, we use them to evaluate error bounds. 
{To compute bounds \eqref{eq:Deltaest} and \eqref{eq:deltaest} for
the cluster $\{\lambda_{n}, \lambda_{n+1}, \cdots, \lambda_{N}\}$, 
we choose $\rho = \lambda_{N+1}$.
}

\begin{figure}[ht]
\begin{center}
    \includegraphics[scale=0.25]{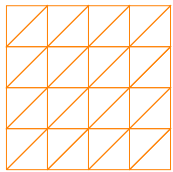}
\end{center}
\caption{\label{fig:uniform_mesh_square} The uniform mesh with $h=1/4$ for the unit square.%
}
\label{fi:squaremesh}
\end{figure}

{Problem \eqref{eq:modpro}} is discretized by the conforming finite element method using piecewise linear functions. The finite element mesh $\cT_h$ is chosen as the uniform triangulation consisting of
isosceles right triangles; 
{see Figure~\ref{fi:squaremesh} for an illustration.
}
The projection error constant can be easily obtained through the interpolation error constant
{as}
$C_h\le h/0.493$.

{
For each cluster, we compute bounds on $\delta_b(E_k,E_{h,k})$ and $\delta_a(E_k,E_{h,k})$ by 
the estimate \eqref{eq:l2_norm_optimal} from Theorem~\ref{th:main_new_} and its combination with the relation \eqref{eq:energy_by_L2}, respectively.
We then compare these results with the bounds \eqref{eq:Deltaest} and \eqref{eq:deltaest} computed by Algorithm I of \cite{LiuVej2022}.
}

The convergence behavior of computed bounds for the four leading clusters is shown in Figure ~\ref{fig:unit-square-l2} and \ref{fig:unit-square-h1}.
The results confirm the expected optimal convergence rate $O(h^2)$ of the estimate \eqref{eq:l2_norm_optimal}
for {$\delta_b(E_k,E_{h,k})$}, and the sub-optimal rate $O(h)$ from Algorithm I of \cite{LiuVej2022}. 

The estimate by Algorithm II of \cite{LiuVej2022} can provide impressively sharp bounds and the optimal convergence rate for the error of approximate eigenfunctions under both $L^2$ and $H^1$ norms. Since such an approach needs more {effort to post-process the approximate eigenfunction, reconstruct the flux, and estimate the} residual error of the eigenfunction approximation, the comparison with Algorithm II of \cite{LiuVej2022} is omitted here.

\begin{figure}[htp]
    \centering
    \includegraphics[width=\textwidth]{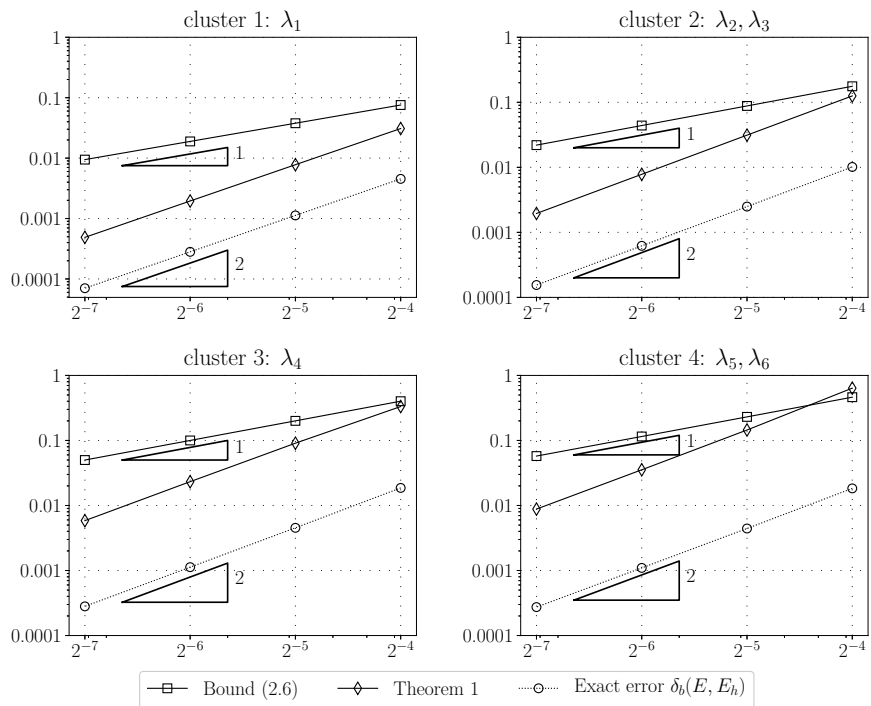}
    \caption{\label{fig:unit-square-l2}Bounds on the $L^2(\Omega)$ distances of spaces of eigenfunctions
    {$\delta_b(E_k,E_{h,k})$} for the square domain and four leading clusters of eigenvalues
    {$k=1,2,3,4$.}
    }
    
\end{figure}

\begin{figure}[htp]
    \centering
    \includegraphics[width=\textwidth]{Unitsquare_H1.eps}
    \caption{\label{fig:unit-square-h1}
    Bounds on the energy distances of spaces of eigenfunctions {$\delta_a(E_k,E_{h,k})$} for the square domain and the four leading clusters {$k=1,2,3,4$}.
    }
\end{figure}

\subsection{The L-shaped domain}
\label{sec:l-shape}
We consider the Laplace eigenvalue problem \eqref{eq:modpro} in the L-shaped domain $\Omega = (-1,1)^2\setminus(-1,0]^2$
to present the standard example with singularities of eigenfunctions
and also to demonstrate the versatility of the proposed method. We solve this problem by using the classical linear conforming finite element space over a uniform mesh.

\begin{figure}[ht]
    \centering
    \includegraphics[scale=0.4]{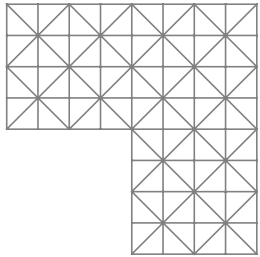}
    \caption{L-shaped domain and the initial mesh
    }
    \label{fig:l_shaped_domain}
\end{figure}

Since the exact eigenvalues are not known,
the eigenvalue bounds
are evaluated by using
two-sided bounds on eigenvalues, which were computed in \cite{liu2014high} 
and we list them in Table~\ref{tab:l_shaped_eig_lower_bound}. 
The first four eigenvalues are simple and form trivial clusters.
The values of the projection error constants are obtained by applying the hypercircle method proposed in \cite{LiuOis2013}; see Table \ref{table:lshape-projection-error-constant}.

\begin{table}[ht]
    \centering
    \caption{Lower bounds on the leading eigenvalues for the L-shaped domain.}
    \begin{tabular}{|c|c|c|c|c|}
    \hline
    \rule[-2mm]{0mm}{6mm}{}
    $\lambda_1$  & $\lambda_2$ &$\lambda_3$ & $\lambda_4$ & $\lambda_5$ \\
        \hline
    \rule[-2mm]{0mm}{6mm}{}
    $9.6397_{1}^{3}$ & $15.1972_{5}^{6}$ & $19.7392_0^1$ & $29.5214_7^9$ & $31.9126_2^4$ \\
        \hline
    \end{tabular}

    \label{tab:l_shaped_eig_lower_bound}
\end{table}

\begin{table}[h]
    \centering
    \caption{\label{table:lshape-projection-error-constant}Projection error constants}
    \begin{tabular}{|c|c|c|c|c|}
    \hline
    \rule[-2mm]{0mm}{6mm}{}
    $h$  & $1/32$ &$1/64$ & $1/128$ & $1/256$ \\
        \hline
    \rule[-2mm]{0mm}{6mm}{}
    $C_h$ & $0.0359$ & $0.0218$ & $0.0134$ & $0.00832$ \\
        \hline
    \end{tabular}

\end{table}

The initial finite element mesh is displayed in Figure~\ref{fig:l_shaped_domain}.
First, we apply the bounds \eqref{eq:Deltaest} and \eqref{eq:deltaest} to the four leading eigenvalue clusters. Since the exact error $\delta_b$ cannot be evaluated directly, we apply the residual error-based estimation, i.e., Algorithm II of \cite{LiuVej2022} to obtain a sharp bound of $\delta_b$. Numerical evaluation of such a bound implies that $\delta_b$ has the convergence rate as $O(h^{3/2})$ for the first cluster and $O(h^{2})$ for the rest $3$ clusters. 
Figure~\ref{fig:lshape-l2} shows the bounds on the $L^2(\Omega)$ distance $\delta_b$. 
Figure~\ref{fig:lshape-h1} compares the bounds on the energy distance $\delta_a$.
The results confirm that the newly proposed estimate of $\delta_b$ based on the projection error estimate, namely the estimate \eqref{eq:l2_norm_optimal} in Theorem~\ref{th:main_new_}, provides improved convergence rates in comparison with the bound \eqref{eq:deltaest}.

\begin{figure}[htp]
    \centering
    \includegraphics[width=\textwidth]{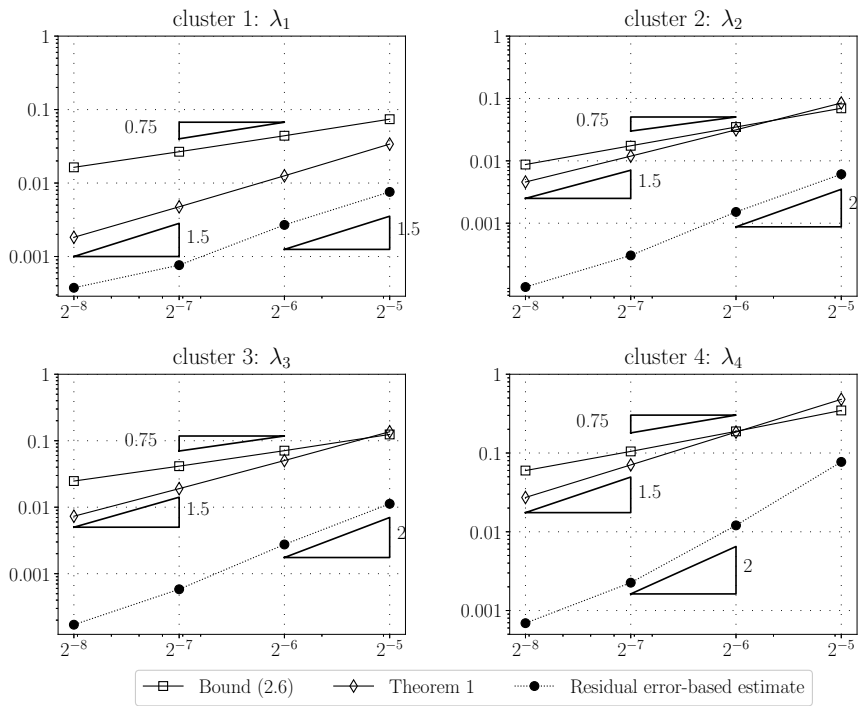}
    \caption{\label{fig:lshape-l2}Bounds on the $L^2(\Omega)$ distances of spaces of eigenfunctions
    {$\delta_b(E_k,E_{h,k})$} for the L-shaped domain and four leading clusters {$k=1,2,3,4$}.
    }
\end{figure}

\begin{figure}[htp]
    \centering
    \includegraphics[width=\textwidth]{LShape_H1_N.eps}
    \caption{\label{fig:lshape-h1}Bounds on the energy distances of spaces of eigenfunctions {$\delta_a(E_k,E_{h,k})$} for the L-shaped domain and the four leading clusters {$k=1,2,3,4$}. 
    }
\end{figure}

\section{Conclusions}
\label{se:conclusions}

For finite element eigenfunctions, we derived 
a projection error-based
bound on the $L^2$ 
distance $\delta_b$ by employing the explicitly 
known value of the constant {$C_h$} in the \emph{a priori} error estimate for the energy projection.
The obtained optimal estimate of $\delta_b$ can be further utilized to improve the bound for the energy distance $\delta_a$. 
The derived bound is fully computable and guaranteed.

%

\bibliographystyle{amsplain}
\bibliography{vejchod_aee}

\end{document}